\newtheorem{theorem}{Theorem}[section]
\newtheorem{definition}[theorem]{Definition}
\newtheorem{proposition}[theorem]{Proposition}
\newtheorem{lemma}[theorem]{Lemma}
\newtheorem{corollary}[theorem]{Corollary}
\newtheorem{example}[theorem]{Example}
\newtheorem{remark}[theorem]{Remark}
\def \> {\rightarrow}
\begin{document}

\pagestyle{plain}   %cabe\c{c}alho vazio e p\'{e} de pagina com numera\c{c}\~{a}o ao centro!

\pagenumbering{arabic}  %numera as p\'{a}ginas com algarismos arabicos!

\vskip1mm
 \begin{center}\large{\bf A study of  Blattner-Montgomery duality in weak Hopf algebras of groupoid algebras type}\end{center}

\begin{center}
{\bf Wagner Cortes}\end{center}

\begin{center}{ \footnotesize Instituto de Matem\' atica\\
Universidade Federal do Rio Grande do Sul\\
91509-900, Porto Alegre, RS, Brazil\\
E-mail: {\it wocortes@gmail.com}}\end{center}

\begin{abstract}

We completely describe the nucleous and the image of   the application defined in \cite{N}, when all tensor products are over a field.  Moreover, we study a relationship with the results obtained in  \cite{DFAP}.  
\end{abstract}

{\bf Keywords:} Weak Hopf Algebras, groupoid algebras, duality.

{\bf MSC 2010:} 16S40; 16T05.

\section*{Introduction} 

Weak bialgebras and weak Hopf algebras are generalizations of ordinary bialgebras and Hopf algebras in the following sense: the  axioms are the same, but the multiplicativity of the counit and comultiplicativity of the unit are replaced by weaker axioms. Perhaps the well-known easiest example of a weak Hopf algebra is a groupoid algebra; other examples are face algebras \cite{H}, quantum groupoids \cite{NV} and generalized Kac algebras \cite{Y}. A purely algebraic approach can be found in \cite{BV} and \cite{BNS}.

The  smash products  considered in [8] and [10] are unital algebras since the tensor products are over  suitable algebras. In this case, the application defined in [8] is an isomorphism,  however,  when we consider the same smash products of [8] and [10]  over a field, they are not necessarily unital algebras.   Thus, our purpose is to present a generalization of Blattner-Montgomery  duality obtained in ([8], Theorem 3.3) in the case of groupoid algebras  and the tensor products  over a field. Moreover,  we obtain a relationship between our results and the results obtained in ([5], Section 5). 

It is convenient to point out that our results are different  from [5].  The authors in [5] studied the Blattner-Montgomery's duality in the case of  $B*_{\beta} G\sharp KG^*$, where $G$ is a groupoid, $\beta$ is an action of $G$ in $B$ and $KG^*$  is the dual weak Hopf algebra of $KG$. In this paper, we consider the smash product of type $B\sharp KG\sharp KG^*$, where $KG$ is a weak Hopf algebra acting on $B$ and $KG^*$ is the dual weak Hopf algebra.    In this paper  we complete the studies of the duality of Blattner-Montgomery  of ([5], Section 5)  considering  the algebra  $B\sharp KG\sharp KG^*$ and we obtain a link between our results and the ones in ([5], Section 5).

The paper is organized as follows: in the Section 1 we give some preliminaries about weak Hopf algebras, weak Hopf module algebras, weak smash products and groupoids, see [3] and [4] for more details.  In the Section 2, we present our main results. We consider the application defined in  ([8], Lemma 3.1),  but the tensor products are over a field $K$. In this case, we completely describe the nucleous and the image and   we obtain an exact sequence of algebras where we relate our description of the image with the results   obtained in (\cite{DFAP}, Section 5).

\section{Preliminaries}
In this section, we present some preliminaries that will be necessary  during  this paper.
 
We begin this section with the following definition, see \cite{BP} and \cite{CG} for more details.

\begin{definition} Let $K$ be a field. A  weak $K$-bialgebra $H$ is a $K$-module with a $K$-algebra structure $(\mu,\eta)$ and a $K$- coalgebra structure $(\Delta,\epsilon)$ such that the following conditions are satisfied:

(i) $\Delta(hk) = \Delta(h)\Delta(k)$, for all $h,k\in H$;

(ii)  $\Delta^2(1)	=	1_1\otimes   1_21'_{1} \otimes 1'_2 = 1_1 \otimes 1'_{1}1_2 \otimes 1'_{2}$;

(iii) $\epsilon(hkl)	=	\epsilon(hk_1)\epsilon(k_2l) = \epsilon(hk_2)\epsilon(k_1l)$, for all $h,k,l\in H$.

Moreover, a weak $K$-bialgebra $H$ is a weak Hopf algebra if there exists a $k$-linear map $S:A\rightarrow A$, called the antipode, satisfying the following additional  conditions:

(i)  $x_1S(x_2)=\epsilon(1_1x)1_2$, for all $x\in H$;

(ii) $S(x_1)x_2=1_1\epsilon(x1_2)$, for all $x\in H$; 

(iii)  $S(x_1)x_2S(x_3)=S(x)$, for all $x\in H$.\end{definition}

 We used the Sweedler-Heyneman notation for the comultiplication, namely
$\Delta(h) = h_1\otimes  h_2$ .

%\begin{remark} From [3] and [4], for any weak bialgebra $H$, we have the maps $\epsilon_t:H\rightarrow H$ and $\epsilon_s:H\rightarrow H$ by the formulas

%\begin{center} $\epsilon_t(x)= \epsilon(1_1x)1_2, \epsilon_s(x)= 1_1\epsilon(x1_2)$, \end{center}

%and denote by $H^t$ the image of $\epsilon_t(H)$ and dentote by $H^s$ the image of $\epsilon_s(H)$.\end{remark}

The following definition appears in \cite{BP}. 

\begin{definition}  Let  $G$ be a non-empty set with a binary operation partially defined, i.e,  for $g, h\in  G$ we write  $\exists gh$ when the binary operation $gh$ is defined.  An element $e\in G$ is called identity  if $\exists eg$ and  $\exists ge$, then   $eg = g = ge$ and the set of identities of $G$ will be denoted by  $G_0$. The set $G$ is called groupoid if the  following conditions are satisfied:

(G1)  For all  $g,h,l\in G$, if $\exists g(hl)$ and  $\exists (gh)l$, then $g(hl)=(gh)l$; 

(G2)  For all $g,h,l\in G$,  $\exists g(hl)$ if and only if  $\exists gh$ and  $\exists hl$; 

(G3) For each  $g\in G$ there exists unique elements $s(g),t(g)\in G$ such that  $\exists gt(g)$, $\exists s(g)g$ and  $gt(g) = g = s(g)g$;

(G4) For each $g\in G$ there exists an unique element $g^{-1}\in G$ such that $t(g)=g^{-1}g$ and $s(g)=gg^{-1}$. In this case, $s(g)$ is called source of $g$ and $t(g)$ is called the target of $g$.\end{definition}

The following lemma appears in \cite{BP}.

\begin{lemma}  Let  $G$ be a  groupoid. Then the following statements hold.

(1)  For any $g,h\in G$,  $\exists gh$  if and only if  $t(g) = s(h)$ and in this case  $s(gh) = s(g)$ and $t(gh) = t(h)$. 

(2) For any $g,h\in G$, $\exists (gh)^{-1}$ if and only if  $\exists h^{-1}g^{-1}$ and in this case  $(gh)^{-1} = h^{-1}g^{-1}$.\end{lemma}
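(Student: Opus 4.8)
The plan is to derive everything from the groupoid axioms (G1)--(G4) together with the uniqueness clauses built into (G3) and (G4), proving part (1) first and then reading off part (2) from it. Along the way I would use, without reproving them, the elementary facts that $s(g),t(g)$ lie in $G_0$ and act as local units (that is, $eg=g$ whenever $\exists eg$ and $ge=g$ whenever $\exists ge$, for $e\in G_0$), that $(g^{-1})^{-1}=g$, and that $s(g^{-1})=t(g)$, $t(g^{-1})=s(g)$; all of these are immediate from (G3), (G4) and the uniqueness they assert, and they are recorded in \cite{BP}. I would also use the symmetric form of (G2), namely that $\exists (gh)l$ is likewise equivalent to ``$\exists gh$ and $\exists hl$''; this follows from (G2) applied to the triple $(gh,l,l^{-1})$, together with the local-unit facts, once $hh^{-1}=s(h)$ is used to recognise $gh$ in the appropriate slot.

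For part (1), the implication ``$t(g)=s(h)\Rightarrow\exists gh$'' is the easy one. Setting $e:=t(g)=s(h)$, axiom (G3) gives $\exists ge$ with $ge=g$ and $\exists eh$ with $eh=h$, so (G2) applied to $(g,e,h)$ yields $\exists g(eh)=\exists gh$. For the converse, assume $\exists gh$. Applying (G2) to the triple $(g,h,h^{-1})$ and using $hh^{-1}=s(h)$ gives $\exists g\,s(h)$, and since $s(h)\in G_0$ is a local unit, $g\,s(h)=g$. Thus $s(h)$ satisfies ``$\exists g\,s(h)$ and $g\,s(h)=g$'', which is exactly the condition characterising $t(g)$ uniquely in (G3), so $t(g)=s(h)$. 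To get $s(gh)=s(g)$ and $t(gh)=t(h)$, I would show that $s(g)$ and $t(h)$ verify the relations that uniquely determine $s(gh)$ and $t(gh)$: by (G2) on $(s(g),g,h)$ we have $\exists s(g)(gh)$, by the symmetric (G2) on $(g,h,t(h))$ we have $\exists (gh)t(h)$, and then (G1) gives $s(g)(gh)=(s(g)g)h=gh$ and $(gh)t(h)=g(h\,t(h))=gh$; uniqueness in (G3), applied to the element $gh$, forces $s(gh)=s(g)$ and $t(gh)=t(h)$.

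For part (2), note first that writing ``$\exists (gh)^{-1}$'' already presupposes $\exists gh$, and conversely $(gh)^{-1}$ always exists once $gh$ does, by (G4); likewise, by part (1), $\exists h^{-1}g^{-1}$ is equivalent to $t(h^{-1})=s(g^{-1})$, which by the identities $t(h^{-1})=s(h)$ and $s(g^{-1})=t(g)$ becomes $s(h)=t(g)$, i.e. $\exists gh$ once more; so both sides of the asserted equivalence amount to $\exists gh$. For the equality, by part (1) the element $(gh)^{-1}$ is the unique $w$ with $w(gh)=t(gh)=t(h)$ and $(gh)w=s(gh)=s(g)$, and I would check that $w=h^{-1}g^{-1}$ works, rebracketing by (G1) and using $g^{-1}g=t(g)=s(h)$: $(h^{-1}g^{-1})(gh)=h^{-1}(g^{-1}g)h=h^{-1}h=t(h)$ and $(gh)(h^{-1}g^{-1})=g(hh^{-1})g^{-1}=gg^{-1}=s(g)$. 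Uniqueness of the inverse in (G4) then gives $(gh)^{-1}=h^{-1}g^{-1}$.

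The only genuine obstacle is the pervasive partiality of the operation: essentially every time associativity (G1) is invoked one must first certify that \emph{both} bracketings are defined, and it is (G2) and its symmetric form that supply this; similarly, each concluding step is an appeal to one of the uniqueness clauses in (G3)/(G4), so care is needed that the element exhibited really does satisfy the characterising relations. Beyond this bookkeeping there is no conceptual difficulty.
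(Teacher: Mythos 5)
The paper does not prove this lemma at all --- it is quoted verbatim from \cite{BP} --- so there is no in-paper argument to compare against. Your proof is the standard derivation from (G1)--(G4) plus the uniqueness clauses, and almost all of it is sound: the two implications in part (1), the identification of $s(gh)$ and $t(gh)$ via the uniqueness in (G3), and the verification in part (2) that $h^{-1}g^{-1}$ satisfies the two relations characterising $(gh)^{-1}$ are all correct as written.

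The one step that does not go through as you describe it is your derivation of the ``symmetric form'' of (G2). You need $\exists (gh)t(h)$ (to conclude $t(gh)=t(h)$) and more generally $\exists (gh)l$ whenever $\exists gh$ and $\exists hl$, but applying (G2) to the triple $(gh,l,l^{-1})$ only reduces $\exists (gh)l$ to $\exists (gh)(ll^{-1})=\exists (gh)s(l)$, which is exactly the same kind of statement you started with; no amount of inserting $hh^{-1}=s(h)$ turns a product of the form $(gh)x$ into one of the form $g(hx)$ without already invoking the equivalence you are trying to prove. With (G1) as literally stated in this paper --- a one-way implication that only applies once \emph{both} bracketings are known to exist --- I do not see how to close this circle. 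The repair is that in the source \cite{BP} the associativity axiom is stated as a biconditional on existence ($\exists g(hl)$ if and only if $\exists (gh)l$, and then the two agree), which makes the symmetric form of (G2) immediate and also legitimises the rebracketings of the four-fold products $(h^{-1}g^{-1})(gh)$ and $(gh)(h^{-1}g^{-1})$ in your part (2). So your argument is correct once (G1) is read in its original biconditional form; as a derivation from the weaker (G1) printed here, that single existence step is a genuine gap.
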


 We  set  $G_2 =\{(g,h)\in G\times G:t(g)=s(h)\}=\{(g,h)\in G\times G| \exists gh\}$, i.e., the set of pair of elements of  $G$ where the product exists.

 Let   $G$ be a groupoid and  $K$  a field. Then the  groupoid algebra  $KG$ is the  $K$-vector space  whose  basis is  $\{u_g | g\in G\}$ with usual sum and multiplication rule
$u_gu_h= \left\{\begin{array}{cc} u_{gh}, \,\,if \,\, t(g) = s(h)\\
 	0, \,\, otherwise
 	\end{array}\right\}$, where $u_g$, $g\in G$ is a symbol to represent $g\in G$. 
 	
From \cite{CG},  $KG$  is a weak Hopf algebra with operations  $\Delta : KG\rightarrow  KG \otimes KG$, $\epsilon : KG\rightarrow  K$ and $S:KG\rightarrow KG$  defined  by $\Delta(u_g)=u_g\otimes u_g$, $\epsilon(u_g)=1_K$ and $S(u_g)=u_{g^{-1}}$, for all $g\in G$.
 Moreover,  if $G$ is a groupoid such that $G_0$ is a finite set,  then  $KG$  is an unital  weak Hopf algebra whose identity is  $\sum_{e\in G_0}u_e$, see \cite{CG} for more details.

 Let  $G$ be a  groupoid with a finite number of elements,   $K$  a field and   $KG^{*}$  the dual algebra.  As in \cite{CG}, we have $\Delta : KG^{*}\rightarrow KG^{*}\otimes  KG^{*}$,  $\epsilon : KG^{*}\rightarrow K$ and $S:KG^{*}\rightarrow KG^{*}$ defined by   $\Delta(\rho_g)=\sum_{hl=g}\rho_h\otimes\rho_l= \sum_{t(g)=t(h)} \rho_{gh^{-1}}\otimes \rho_h$, $\epsilon(\sum r_g\rho _g)=\sum_{e\in G_0}r_e$ and $S(a_g\rho_g)=a_g\rho_{g^{-1}}$.   We easily have that $KG^{*}$ is a weak Hopf algebra, see \cite{CG} for more details.
 
 We finish this  section with the definition of weak Hopf module algebra, weak smash product and related subjects, see  \cite{BV}, \cite{BNS} and \cite{N} for more details.

\begin{definition} \label{1} Let $KG$ be the  weak Hopf algebra of groupoid type  and $B$ a $K$-algebra. We say that $B$ is a left  weak  $KG$-module algebra if there exists an action $.:KG\otimes B\rightarrow B$ such that the following conditions are satisfied:

(i) $B$ is a left $KG$-module.

(ii) $\sigma(ab)=(\sigma_1.a)(\sigma_2.b)$

(iii) $\sigma1_B=\epsilon_t(\sigma)1_B.$\end{definition}

Suppose that   $KG$ is a weak Hopf algebra of finite dimension and $B$ a left weak Hopf $KG$-module algebra. Similarly with \cite{N} the weak smash product $B\sharp KG$ is the tensor product $B\otimes_{K} KG$ with usual sum and multiplication rule is \begin{center}$(a\sharp u_{\sigma})(b\sharp u_{\tau})=a(\sigma.b)\sharp u_{\sigma\tau}$.\end{center} Note that, in this case, $1_B\sharp 1_{KG}$ is not necessarily an identity of $B\sharp KG$. Since $KG$ is a finite dimensional weak Hopf algebra, we have that $KG^{*}$ the dual  weak Hopf algebra and we have the double smash product $B\sharp KG\sharp KG^{*}$ as the tensor product $B\otimes_{K} KG\otimes_{K} KG^{*}$ with usual sum and multiplication rule is  $(a\sharp u_{m} \sharp \rho_n)(b\sharp u_s \sharp  \rho_t)=(a\sharp u_m)(\rho_n \rightharpoonup (b\sharp u_s))\sharp \rho_n* \rho_t )$, where the action of $KG^{*}$ on $B\sharp KG$ is defined by \begin{center}$\rho_h \rightharpoonup (b\sharp u_l)=b\sharp \rho_h(u_l)u_l$,\end{center}   and  for each $f,g\in KG^*$,   $f*g$  is the convolution product  defined by $(f*g)(\delta_{\sigma})=f(\delta_{\sigma})g(\delta_{\sigma})$, $\sigma\in G$.

\section{A study  of Blattner-Montgomery's duality}

In this section,  we present the main results of this paper. From now on $K$ will always denote a field, all the tensor products will be over $K$, $H=KG$, where $G$ is a finite groupoid and   the smash products $B\sharp KG$ and $B\sharp KG\sharp KG^{*}$ as   the tensor product $B\otimes_{K} KG$ and \begin{center} $B\otimes_{K} KG\otimes_{K} KG^{*}$,\end{center}  respectively, with usual sum and multiplication rule defined as in the last section. Note that  $1_B\sharp 1_{KG}$ and $1_B\sharp 1_{KG}\sharp 1_{KG^{*}}$ are not necessarily the identities of $B\sharp KG$ and $B\sharp KG\sharp KG^{*}$, respectively. 

The following result was proved in \cite{CG}  and it is used from now on  without further mention.

\begin{proposition} \label{4} Suppose that $B$ is a left weak $KG$-module algebra. Then $\{e.1_B| e\in G_0\}$ is a set of central orthogonal idempotents in $B$ and $B=\oplus_{\{x\in G_0\}}B_x$, where $B_x= B(x.1_B)=B(t(x).1_B)=B_{t(x)}$\end{proposition}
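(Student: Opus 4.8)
The plan is to verify the three asserted properties in turn, working directly from Definition~\ref{1} and the groupoid axioms (G1)--(G4) together with Lemma~1.3.

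First I would establish that each $e.1_B$ is idempotent. For $e\in G_0$ we have $\Delta(u_e)=u_e\otimes u_e$, so applying condition (ii) of Definition~\ref{1} with $a=b=1_B$ gives $e.(1_B1_B)=(e.1_B)(e.1_B)$, i.e. $e.1_B=(e.1_B)^2$. Next, orthogonality: for $e\neq f$ in $G_0$, I want $(e.1_B)(f.1_B)=0$. Here I would use condition (iii), $\sigma.1_B=\epsilon_t(\sigma)1_B$, and the explicit description of $\epsilon_t$ on the groupoid algebra, to reduce $(e.1_B)(f.1_B)$ to $(e.(f.1_B))$ or similar via associativity of the module action; the point is that $u_eu_f=0$ when $e\neq f$ (since for distinct identities $t(e)=e\neq f=s(f)$), and $\epsilon_t(u_e)$ is a scalar multiple of the local unit attached to $e$, so the product of the two lands in orthogonal blocks. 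Concretely, one shows $e.1_B=t(e).1_B$ and that $(e.1_B)(f.1_B)=(e.1_B)\bigl((ef).1_B\bigr)$-type manipulation forces $0$; alternatively, since $B$ is a $KG$-module and $u_e$ acts, one computes $(e.1_B)(f.1_B) = e.((e^{-1}e).\,\cdots)$ — the cleanest route is: $(e.1_B)(f.1_B)=(e.1_B)(f.((f.1_B)))=\cdots$, but the honest quick argument is to use (ii) ``backwards'': write $e.1_B$ as a product and push the $f$-action through.

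For centrality, I would take an arbitrary $b\in B$ and show $(e.1_B)b=b(e.1_B)$. The idea is that $b\in \oplus_{x}B_x$ once we know the decomposition, but to avoid circularity I would argue directly: using $u_e = \Delta$-diagonal and condition (ii), $e.(\,(e.1_B)\,b\,)=(e.(e.1_B))(e.b)=(e.1_B)(e.b)$, and similarly on the other side, exploiting that $e.1_B$ is a ``unit for the $e$-component'' of the action. Centrality then follows from the fact that $\{e.1_B\}$ sum to act as the identity operator on $B$ (because $\sum_{e\in G_0}u_e=1_{KG}$ and $B$ is a unital $KG$-module in the weak sense, so $\sum_e e.1_B = 1_B$ after using $\epsilon_t$), combined with the interplay between (ii) and (iii). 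The decomposition $B=\oplus_{x\in G_0}B_x$ with $B_x=B(x.1_B)$ is then immediate from $\sum_{x\in G_0}(x.1_B)=1_B$ together with orthogonality and idempotency, and the identity $B(x.1_B)=B(t(x).1_B)$ holds because $x\in G_0$ implies $t(x)=x$, so $B_x=B_{t(x)}$ is a tautology at the level of identities but is recorded for use when $x$ ranges over non-identity elements elsewhere.

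The main obstacle I anticipate is pinning down exactly how $\epsilon_t$ acts on $KG$ and hence what $\sigma.1_B$ is for a general $\sigma$, since the paper states $\sigma.1_B=\epsilon_t(\sigma)1_B$ but the excerpt does not spell out $\epsilon_t$ for groupoid algebras; I would need the formula $\epsilon_t(u_\sigma)=u_{s(\sigma)}$ (or the analogous source/target idempotent) to make the orthogonality and ``$\sum_e e.1_B=1_B$'' steps rigorous. Once that formula is in hand, every step is a short computation using $\Delta(u_g)=u_g\otimes u_g$, the multiplication rule $u_gu_h=u_{gh}$ or $0$, and the module axioms; the combinatorics of $G_0$ (distinct identities multiply to zero, each $g$ has a well-defined source and target) does the rest.
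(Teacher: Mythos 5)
The paper does not actually prove this proposition — it is quoted from \cite{CG} — so your attempt has to stand on its own rather than be measured against an in-paper argument. Your idempotency step is correct and standard ($\Delta(u_e)=u_e\otimes u_e$ plus axiom (ii) of Definition \ref{1} applied to $1_B=1_B1_B$). The genuine gap is in orthogonality and centrality, which you gesture at but never compute; moreover the one concrete manipulation you write down, $(e.1_B)(f.1_B)=(e.1_B)\bigl((ef).1_B\bigr)$, is not licensed by any axiom: condition (ii) lets you split the action of a \emph{single} element of $KG$ over a product in $B$, but nothing lets you absorb one factor of a product in $B$ into the groupoid element acting on the other factor. Likewise, ``the product of the two lands in orthogonal blocks'' presupposes exactly the block decomposition you are trying to establish, so that step is circular as stated.

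The missing idea is the auxiliary fact that $(e.B)(f.B)=0$ for distinct $e,f\in G_0$. It follows by letting every identity act on the product and using unitality of the module: for $a'=e.a$ and $b'=f.b$ one has $x.(a'b')=(x.(e.a))(x.(f.b))=((u_xu_e).a)\,((u_xu_f).b)$, which vanishes for every $x\in G_0$ when $e\neq f$ because $u_xu_e=\delta_{x,e}u_e$ on identities; summing over $x\in G_0$ and using $\sum_{x\in G_0}u_x=1_{KG}$ gives $a'b'=1_{KG}.(a'b')=0$. Orthogonality of the elements $e.1_B$ is then immediate, and centrality follows from $b=\sum_{f\in G_0} f.b$ together with $(e.1_B)(e.b)=e.(1_B\,b)=e.b=(e.b)(e.1_B)$, which yields $(e.1_B)b=e.b=b(e.1_B)$; the identity $\sum_e e.1_B=1_B$ and the direct sum decomposition then drop out exactly as in your concluding paragraph. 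Note finally that the formula for $\epsilon_t(u_\sigma)$ which you single out as the main obstacle is not needed anywhere in this argument: only the multiplication table of $KG$ restricted to $G_0$, unitality of the $KG$-module, and axiom (ii) are used.
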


\par  We consider the subsets $A_1=\{g.a\sharp u_g\sharp \rho_h|(g,h)\in G_2\,\, and \,\, s(g)=t(g)\}$, $A_2=\{g.a\sharp u_g\sharp \rho_h|(g,h)\in G_2, \,\, and \,\, s(g)\neq t(g)\}$, $A_3=\{ a\sharp u_g\sharp \rho_h| a\in B, (g,h)\notin G_2\}$, $A_4=\{a\sharp u_g\sharp \rho_h|a\in B_l, a\notin B_g, (l,g)\notin G_2, (g,l)\notin G_2, (g,h)\in G_2\}$,  $A_5=\{a\sharp  u_g\sharp \rho_h|a\in B_l, a\notin B_g, (l,g)\in G_2, (g,l)\notin G_2, (g,h)\in G_2, s(g)\neq t(g)\}$, $A_6=\{a\sharp u_g\sharp \rho_h|a\in B_l, a\notin B_g, (l,g)\notin G_2, (g,l)\in G_2, (g,h)\in G_2\}$, $A_7=\{a\sharp u_g\sharp \rho_h|a\notin  B_g, (g,h)\in G_2, a\in B_l, (l,g)\in G_2, (g,l)\in G_2, s(g)=t(g)\}$, $A_8=\{a\sharp u_g\sharp \rho_h|a\in B_l, a\notin B_g, (l,g)\in G_2, (g,l)\in G_2, (g,h)\in G_2, s(g)\neq t(g)\}$, $A_9=\{a\sharp  u_g\sharp \rho_h|a\notin B_g, a\in B_l, (g,h)\in G_2, (l.g)\in G_2, (g,l)\notin G_2, s(g)\notin t(g)\}$ and $A_{10}=\{ a\sharp u_g\sharp \rho_h| a\notin B_g, a\in B_l, (g,h)\in G_2, (l,g)\in G_2, (g,l)\notin G_2, s(g)=t(g)\}$ of $B\sharp KG\sharp KG^{*}$  and we easily have that $B\sharp KG\sharp KG^{*}=A_1+A_2+A_3+A_4+A_5+A_6+A_7+A_8+A_9+A_{10}$.   

 Analogously to    (\cite{N}, Lemma 3.1)  we have the homomorphim  $\varphi:B\sharp KG \sharp KG^{*}\rightarrow End(B\sharp KG)_{B}$ defined  by $\varphi(b\sharp u_h\sharp \rho_l)(a\sharp u_m)=(b\sharp u_h)(a\sharp \rho_l \rightharpoonup u_m)$ and in 
 the next theorem we completely describe the nucleous and the image.

\begin{theorem}  \label{5} Let $B$ a left weak $KG$-module algebra and $\varphi:B\sharp KG\sharp KG^{*}\rightarrow End_{B}(B\sharp KG)$ defined by \begin{center}$\varphi(a\sharp u_g\sharp \rho_h)(b\sharp u_l)=(a\sharp u_g)(b\sharp \rho_h\rightharpoonup u_l)$.\end{center}   Then $ker \varphi=A_3+A_4+A_5+A_6$ and $A_i\cap ker\varphi=(0)$, for $i\in \{1,2, 7,8,9,10\}$. In this case, $Im\varphi\simeq (B\sharp KG\sharp KG^{*})/ ker\varphi \simeq A_1+A_2+A_7+A_8+A_9+A_{10}$ \end{theorem}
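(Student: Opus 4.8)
The plan is to first turn $\varphi$ into an explicit formula, then reduce the computation of $ker\varphi$ to a block-by-block problem inside $End_B(B\sharp KG)$, and only at the end to match the surviving conditions with the ten families $A_1,\dots,A_{10}$. For the explicit formula: from $\rho_h\rightharpoonup u_l=\rho_h(u_l)u_l=\delta_{h,l}u_l$ and $(a\sharp u_g)(b\sharp u_l)=a(g.b)\sharp u_gu_l$ one obtains, on all generators,
\[\varphi(a\sharp u_g\sharp\rho_h)(b\sharp u_l)=\delta_{h,l}\,a(g.b)\sharp u_{gh},\qquad u_{gh}:=0\ \text{when}\ (g,h)\notin G_2.\]
Viewing $B\sharp KG=\bigoplus_{m\in G}B\sharp u_m$ as a direct sum of right $B$-modules (the right action being right multiplication by $b\sharp 1_{KG}$, which preserves each $B\sharp u_m$), the formula says $\varphi(a\sharp u_g\sharp\rho_h)$ annihilates $B\sharp u_m$ for $m\dif h$ and carries $B\sharp u_h$ into $B\sharp u_{gh}$; by right cancellation in $G$, two generators have nonzero image in the same block $Hom_B(B\sharp u_h,B\sharp u_{gh})$ only when they share the pair $(g,h)$. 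Moreover, Definition \ref{1}(iii) together with $\epsilon_t(u_g)=u_gS(u_g)=u_{gg^{-1}}=u_{s(g)}$ gives $g.1_B=s(g).1_B$, so by Proposition \ref{4} we have $g.B\con B_{s(g)}$ and $s(g).1_B=1_{B_{s(g)}}=g.(t(g).1_B)\in g.B$; hence for every $a\in B$ we get the key equivalence
\[a(g.B)=0\iff\text{the }B_{s(g)}\text{-homogeneous component of }a\text{ is zero}.\]

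Consequently a single generator satisfies $\varphi(a\sharp u_g\sharp\rho_h)=0$ precisely when $(g,h)\notin G_2$ or the $B_{s(g)}$-component of $a$ vanishes. I would then read the defining conditions of $A_3,A_4,A_5,A_6$ against this dichotomy to get $A_3+A_4+A_5+A_6\con ker\varphi$: in $A_3$ the hypothesis is literally $(g,h)\notin G_2$, whereas in $A_4,A_5,A_6$ the conditions relating $a$ (through ``$a\in B_l$'' and the membership of $(l,g)$ and $(g,l)$ in $G_2$) to $s(g)$ force, via $\exists gh\iff t(g)=s(h)$ and $B_x\inter B_y=(0)$ for $x\dif y$ in $G_0$, the $B_{s(g)}$-component of $a$ to be zero.

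Next I would show $\varphi$ is injective on $A_1+A_2+A_7+A_8+A_9+A_{10}$. Write such an element as $\xi=\sum_{(g,h)}\big(\sum_i c_i^{(g,h)}a_i^{(g,h)}\big)\sharp u_g\sharp\rho_h$ with the $a_i^{(g,h)}$ linearly independent in $B$, noting that every occurring $(g,h)$ lies in $G_2$ since each generator of these six families does. Because distinct blocks are independent in $End_B(B\sharp KG)$, $\varphi(\xi)=0$ forces $\big(\sum_i c_i^{(g,h)}a_i^{(g,h)}\big)(g.B)=0$ for each such $(g,h)$, i.e.\ that the $B_{s(g)}$-component of $\sum_i c_i^{(g,h)}a_i^{(g,h)}$ is zero. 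But every generator of $A_1,A_2,A_7,A_8,A_9,A_{10}$ has its $B$-coefficient already in $B_{s(g)}$: for $A_1,A_2$ because it is of the form $g.a$ and $g.B\con B_{s(g)}$, and for $A_7,A_8,A_9,A_{10}$ because ``$a\in B_l$'' with ``$(l,g)\in G_2$'' places $a$ in $B_{s(g)}$. Hence $\sum_i c_i^{(g,h)}a_i^{(g,h)}$ equals its own $B_{s(g)}$-component, so it vanishes, so all $c_i^{(g,h)}=0$ and $\xi=0$. With these two facts the theorem assembles: since $B\sharp KG\sharp KG^{*}=(A_1+A_2+A_7+A_8+A_9+A_{10})+(A_3+A_4+A_5+A_6)$, any $\xi\in ker\varphi$ splits as $\xi=\xi'+\xi''$ with $\xi'\in A_1+A_2+A_7+A_8+A_9+A_{10}$ and $\xi''\in A_3+A_4+A_5+A_6\con ker\varphi$, whence $\varphi(\xi')=0$, so $\xi'=0$ by the injectivity just proved and $\xi\in A_3+A_4+A_5+A_6$; thus $ker\varphi=A_3+A_4+A_5+A_6$, and $A_i\inter ker\varphi\con(A_1+A_2+A_7+A_8+A_9+A_{10})\inter ker\varphi=(0)$ for $i\in\{1,2,7,8,9,10\}$. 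Finally $\varphi$ is an algebra homomorphism, so $Im\varphi\simeq(B\sharp KG\sharp KG^{*})/ker\varphi$, and since $A_1+A_2+A_7+A_8+A_9+A_{10}$ is a vector-space complement of $ker\varphi$, the canonical projection restricts to an isomorphism of it onto the quotient, giving $Im\varphi\simeq A_1+A_2+A_7+A_8+A_9+A_{10}$.

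The main obstacle is the bookkeeping in the two middle paragraphs: one must verify that the ten groupoid-theoretic case conditions genuinely sort the generators by the single invariant ``is $(g,h)\notin G_2$, or is the $B_{s(g)}$-part of $a$ zero?'', and, for the injectivity step, that none of $A_1,A_2,A_7,A_8,A_9,A_{10}$ contains a generator whose $B$-coefficient leaves $B_{s(g)}$. This is elementary but lengthy, and rests entirely on the preliminary groupoid lemma (notably $\exists gh\iff t(g)=s(h)$, $s(gh)=s(g)$, $t(gh)=t(h)$) and on Proposition \ref{4}.
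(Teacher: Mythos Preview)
Your argument is correct in strategy and takes a genuinely different route from the paper's. The paper proceeds entirely case by case: for each of $A_3,\dots,A_6$ it computes $\varphi(a\sharp u_g\sharp\rho_h)(b\sharp u_m)$ directly and shows it vanishes, and for each of $A_1,A_2,A_7,\dots,A_{10}$ it exhibits one concrete test element (e.g.\ $\sum_{e\in G_0}e.1_B\sharp u_h$, or $(g^{-1}l).1_B\sharp 1_{KG}$, or $t(l).1_B\sharp u_h$) on which the generator does not vanish, and from ``$A_i\nsubseteq\ker\varphi$'' infers $A_i\cap\ker\varphi=(0)$. You instead extract a single invariant: after writing $\varphi(a\sharp u_g\sharp\rho_h)(b\sharp u_l)=\delta_{h,l}\,a(g.b)\sharp u_{gh}$, you observe that this map sits in the block $\mathrm{Hom}_B(B\sharp u_h,B\sharp u_{gh})$, that distinct pairs $(g,h)$ occupy distinct blocks (by right cancellation in $G$), and that the map is zero iff $(g,h)\notin G_2$ or $a\cdot(s(g).1_B)=0$. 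This buys you two things the paper's argument does not quite deliver as written: a uniform criterion against which all ten families can be tested, and an honest treatment of \emph{linear combinations} in the injectivity step, whereas the paper's test-vector method only checks that single generators are not killed. One caution on your final bookkeeping: when you assert that ``$a\in B_l$ with $(l,g)\in G_2$ places $a$ in $B_{s(g)}$'' you are using the identification $B_l=B_{t(l)}$ of Proposition~\ref{4}, while your own computation $g.1_B=\epsilon_t(u_g).1_B=s(g).1_B$ points to $B_g=B_{s(g)}$; the paper's conventions here are not entirely consistent (indeed $A_5$ and $A_9$ as printed carry identical conditions), so when you carry out the ``elementary but lengthy'' verification you flag at the end, fix one convention for $B_x$ and check each family against your criterion under that convention.
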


\begin{proof}  Let $a\sharp u_g\sharp \rho_h\in A_3$. Then \begin{center}$\varphi( a\sharp u_g\sharp \rho_h)(b\sharp u_l)=(a\sharp u_g)(b\sharp \rho_h\rightharpoonup u_l)=\left\{\begin{array}{cc}
(a\sharp u_g)(b\sharp u_l)\, \, if\,\, l=h\\
0\,\, if\,\,  l\neq h\\
\end{array}\right\}$.\end{center} Since  $(g,h)\notin G_2$ we have that  $\varphi( a\sharp u_g\sharp \rho_h)(b\sharp u_l)=0$.

 Let $a\sharp u_g\sharp \rho_h\in A_4$. Then \begin{center} $\varphi( a\sharp u_g\sharp \rho_h)(b\sharp u_m)=(a\sharp u_g)(b\sharp \rho_h)\rightharpoonup u_m=\left\{\begin{array}{cc}
(a\sharp u_g)(b\sharp u_m) \,\, if\,\, m=h\\
0 \,\,if\,\, m\neq h\\
\end{array}\right\}$.\end{center}  If $m=h$, then we have that \begin{center}$(a\sharp u_g)(b\sharp u_h)=
a(g.b)\sharp u_{gh}= a(h.1_B)(g.1_B)(g.b)\sharp u_{gh}=a(t(h).1_B)(t(g).1_B)(g.b)\sharp u_{gh}$.\end{center} By the fact that  $(l,g)\notin G_2$, we have that  $a(l.1_B)(h.1_B)(g.1_B)(g.b)\in B_{lg}=0$. Thus, $a\sharp u_g\sharp \rho_h\in ker \varphi$ and it follows that   $A_4\subseteq ker \varphi$.   By similar arguments  we show  that  $A_5\subseteq ker \varphi$. 
Next, let  $a\sharp u_g\sharp \rho_h\in A_6$. Then,  $\varphi( a\sharp u_g\sharp \rho_h)(b\sharp u_m)=(a\sharp u_g)(b\sharp \rho_h)\rightharpoonup u_m)=\left\{\begin{array}{cc}
(a\sharp u_g)(b\sharp u_h)\,\, if\,\, m=h\\
0 \,\,if\,\, m\neq h\\
\end{array}\right\}$. Suppose that $m=h$. Hence, we have that  $\varphi( a\sharp u_g\sharp \rho_h)(b\sharp u_m)=(a\sharp u_g)(b\sharp u_h)=a(g.b)\sharp u_{gh}$ and it follows that    \begin{center}$\varphi( a\sharp u_g\sharp \rho_h)(b\sharp u_m)=a(g.b)\sharp u_{gh}=a(t(l).1_B)(t(g).1_B)(g.b)\sharp u_{gh}=0$,\end{center} because if $t(l)=t(g)$, then we would have $a\in B_g$, which contradicts the fact that $a\notin B_g$.  Consequently, $A_6\subseteq ker\varphi$. 

 We show that   $A_i\cap ker\varphi=(0)$, for $i\in \{1,2, 7,8,9,10\}$.  In fact, let $g.a\sharp u_g\sharp \rho_h\in A_1$ and $s=\sum_{e\in G_0} e.1_B\sharp u_h\in B\sharp KG\sharp KG^*$. Then, \begin{center}$\varphi(g.a\sharp u_g\sharp \rho_h)(\sum_{e\in G_0} e.1_B\sharp u_h)=\sum_{e\in G_0}(g.a\sharp u_g)(e.1_B\sharp u_h)=(g.a\sharp u_h)$.\end{center}  and we have that  $A_1\nsubseteq ker\varphi$. Thus,  $A_1\cap \ker\varphi=(0)$. Let $a\sharp u_g\sharp \rho_h \in A_8$ and $(g^{-1}l).1_B\sharp 1_{KG}\in B\sharp KG$.  Then $\varphi(a\sharp u_g\sharp \rho_h)((g^{-1}l).1_B\sharp 1_{KG})=(a\sharp u_g)((g^{-1}l).1_B\sharp u_h)=a(g(g^{-1}l).1_B)\sharp u_{gh}=a(s(g)(l.1_B))\sharp u_{gh}=a(l.1_B)\sharp u_{gh}=a\sharp u_{gh}$ and we obtain that   $A_8\nsubseteq ker\varphi$ and  $A_8\cap ker\varphi=(0)$.  Let $a\sharp u_g \sharp \rho_h\in A_{10}$ and $t(l).1_B\sharp u_h\in B\sharp KG$. Then, $\varphi(a\sharp u_g\sharp \rho_h)(t(l)1_B\sharp u_h)=a\sharp u_{gh}$. Hence, $A_{10}\nsubseteq ker\varphi$ and we obtain that $A_{10}\cap ker\varphi =(0)$. The  same arguments used to prove that $A_1\cap ker\varphi=0$, $A_8\cap ker\varphi =0$ and $A_{10}\cap ker\varphi=0$ can be used to prove that   $A_2\cap ker\varphi=0$, $A_7\cap ker\varphi=0$ and $A_9\cap ker\varphi=0$, respectively. 
 
 Now, by the fact that  $B\sharp KG\sharp KG^{*}=A_1+A_2+A_3+A_4+A_5+A_6+A_7+A_8+A_9+A_{10}$  we have that $ker \varphi=A_3+ A_4 + A_5+A_6$.
 
 So, $Im\varphi\simeq (B\sharp KG\sharp KG^{*})/ ker\varphi \simeq A_1+A_2+A_7+A_8+A_9+A_{10}$\end{proof}   
 
 During the rest of this paper, we use the sets $A_1$, $A_2$, $A_3$, $A_4$, $A_5$, $A_6$, $A_7$, $A_8$, $A_9$, $A_{10}$ and the decomposition $B\sharp KG\sharp KG^*=A_1+A_2+A_3+A_4+A_5+A_6+A_7+A_8+A_9+A_{10}$ mentioned as before.

During  the next three results  we proceed to study with more details about the algebra $Im\varphi$

\begin{proposition} \label{7} $A_1+A_7+A_{10}$ is a subalgebra of $B\sharp KG\sharp KG^{*}$.\end{proposition}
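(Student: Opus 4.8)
To prove that $A_1 + A_7 + A_{10}$ is a subalgebra of $B\sharp KG\sharp KG^*$, the plan is to show it is closed under multiplication; it is visibly closed under addition and scalar multiplication, being a sum of $K$-spans of basis-type elements. First I would recall the common structural feature of $A_1$, $A_7$, $A_{10}$: in each case the element $a\sharp u_g\sharp\rho_h$ satisfies $(g,h)\in G_2$ (so $t(g)=s(h)$), $s(g)=t(g)$ (so $g$ is a loop at the identity $e:=s(g)=t(g)$), and the coefficient $a$ lies in $B_e$ — indeed in $A_1$ one has $a$ replaced by $g.a$, which lies in $B_{s(g)}=B_e$ by Proposition~\ref{4} since the action of $u_g$ maps $B_{t(g^{-1})}=B_{s(g)}=B_e$ into $B_{t(g)}=B_e$; in $A_7$ one has $a\in B_l$ with $(l,g),(g,l)\in G_2$ and $s(g)=t(g)$, forcing $s(l)=t(l)=e$ hence $a\in B_e$; and in $A_{10}$ similarly $a\in B_l$ with $(l,g)\in G_2$, $(g,l)\notin G_2$, $s(g)=t(g)=e$, which gives $t(l)=s(g)=e$, so the $B_e$-component of $a$ is what survives multiplication by $g.1_B = t(g).1_B = e.1_B$. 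So the right guess is that $A_1+A_7+A_{10}$ equals the set of all $K$-linear combinations of elements $a\sharp u_g\sharp\rho_h$ with $g$ a loop at an identity $e$, $t(g)=s(h)$, and $a\in B_e$ (equivalently $e.1_B$ acting as identity on the relevant component), which is manifestly closed under the product once one checks the bookkeeping.

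The key computation is then the product of two such elements. Using the multiplication rule of $B\sharp KG\sharp KG^*$, for $a\sharp u_g\sharp\rho_h$ and $b\sharp u_{g'}\sharp\rho_{h'}$ with $g,g'$ loops at identities $e,e'$ respectively, one computes
\[
(a\sharp u_g\sharp\rho_h)(b\sharp u_{g'}\sharp\rho_{h'}) = (a\sharp u_g)\bigl(\rho_h\rightharpoonup(b\sharp u_{g'})\bigr)\sharp(\rho_h*\rho_{h'}),
\]
and since $\rho_h\rightharpoonup(b\sharp u_{g'}) = b\sharp\rho_h(u_{g'})u_{g'}$ which is nonzero only when $g'=h$, and $\rho_h*\rho_{h'}$ is nonzero only when $h=h'$, the product vanishes unless $g'=h=h'$. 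In the surviving case it equals $a(g.b)\sharp u_{gg'}\sharp\rho_h$; here $t(g)=s(g')=e=e'$ (since $g'=h$ and $t(g)=s(h)$, while $g'$ is a loop at $e'$ forces $s(g')=e'$), so $e=e'$, $gg'$ is again a loop at $e$, $(gg',h)\in G_2$ because $t(gg')=t(g')=e=s(h)$, and $a(g.b)\in B_e$ since $a\in B_e$ and $g.b\in B_{t(g)}=B_e$. Thus the product is again of the required form, i.e.\ lies in $A_1+A_7+A_{10}$ (one can check it lands in $A_1$ if $a(g.b)$ has the "$gg'.(\text{something})$" shape, and otherwise in $A_7$ or $A_{10}$ according to whether $(l,gg')\in G_2$ and $(gg',l)\in G_2$ for the relevant $l$). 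I would organize this as: (1) reduce to products of the three named types among themselves ($3\times 3 = 9$ cases, but they collapse), (2) observe all nonzero products satisfy $g'=h=h'$, (3) verify the three defining conditions (loop, $G_2$-compatibility, coefficient in $B_e$) are preserved.

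The main obstacle is purely the combinatorial case analysis: one must confirm that in every one of the potentially nonzero products, the resulting triple $a(g.b)\sharp u_{gg'}\sharp\rho_h$ indeed falls into $A_1\cup A_7\cup A_{10}$ and not into one of the excluded pieces $A_2$ (which would require $s(gg')\neq t(gg')$, impossible here), $A_5$, $A_6$, $A_8$, $A_9$ (all requiring $s(g)\neq t(g)$, impossible here) or $A_3$, $A_4$ (requiring $(gg',h)\notin G_2$ or incompatible $B_l$-grading, which the computation rules out). The cleanest route is to prove the alternative intrinsic description of $A_1+A_7+A_{10}$ mentioned above — "$g$ a loop at $e$, $t(g)=s(h)$, $a\in B_e$" — as a preliminary lemma-free observation, since then closure is immediate from the product formula and one avoids the ten-fold bookkeeping entirely; I would present the proof in that streamlined form, remarking that membership in the specific $A_i$ is then a matter of which sub-conditions on the auxiliary index $l$ hold.
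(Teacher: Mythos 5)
Your overall strategy --- replacing the ten-set bookkeeping by the uniform description ``$g$ a loop at an identity $e$, $(g,h)\in G_2$, coefficient in $B_e$'' and checking that this description is stable under multiplication --- is a genuine and sensible streamlining of the paper's argument, which instead verifies the nine inclusions $A_iA_j\subseteq A_1+A_7+A_{10}$ for $i,j\in\{1,7,10\}$ one at a time (doing $A_1A_7$ in detail and asserting the rest are similar). The problem is in your key computation. You take the multiplication rule literally as stated in the preliminaries, so that $(a\sharp u_g\sharp\rho_h)(b\sharp u_{g'}\sharp\rho_{h'})$ vanishes unless $g'=h=h'$. But in all the computations of Section 2 --- including the paper's own proof of this very proposition --- the product is computed with the coproduct of $KG^{*}$ inserted, $\Delta(\rho_h)=\sum_{k}\rho_{hk^{-1}}\otimes\rho_k$, so that
\[
(a\sharp u_g\sharp\rho_h)(b\sharp u_{g'}\sharp\rho_{h'})=\sum_{k}(a\sharp u_g)\bigl(b\sharp \rho_{hk^{-1}}(u_{g'})\,u_{g'}\bigr)\sharp\bigl(\rho_k*\rho_{h'}\bigr),
\]
which survives precisely when $g'=hh'^{-1}$, equivalently $h=g'h'$ (this is the condition ``$k=g'^{-1}h=h'$'' appearing in the paper's proof), and not only when $g'=h=h'$. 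Hence there are nonvanishing products with $h'\neq h$ that your case analysis never sees; as written, your closure check treats only the diagonal case and does not establish the proposition for the multiplication the paper is actually using.

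Fortunately your intrinsic description is still closed under the correct product, so the argument is repairable rather than wrong in spirit: if $g$ is a loop at $e$, $g'$ a loop at $e'$, $s(h)=e$, $s(h')=e'$ and $g'=hh'^{-1}$, then $e'=s(g')=s(h)=e$, so $gg'$ is again a loop at $e$, $(gg',h')\in G_2$, and $a(g.b)\in B_e$; the surviving term $a(g.b)\sharp u_{gg'}\sharp\rho_{h'}$ has the required shape. So you need only redo your step (2) with the selection rule $h=g'h'$ in place of $g'=h=h'$. You should also state explicitly, rather than as an aside, that your intrinsic set coincides with $A_1+A_7+A_{10}$ and does not leak into the excluded pieces: the conditions $s(g)=t(g)$, $(g,h)\in G_2$ and $a\in B_e$ do rule out $A_2,\dots,A_6,A_8,A_9$, but given how delicately the $A_i$ are carved out, that identification is doing real work in your proof and deserves its own verification.
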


\begin{proof}  Let $f_1=g.a\sharp u_g\sharp \rho_h\in A_1$ and $f_2=b\sharp u_{g'}\sharp \rho_{h'}\in A_7$. Then \begin{center} $f_1f_2=(g.a\sharp u_g)(b\sharp \delta_{hk^{-1},g'}u_{g'}\sharp \delta_{k,h'}\rho_k).$ (*) \end{center} 
Note that $hk^{-1}=g'\Leftrightarrow g'^{-1}h=k$. If either $s(g')\neq t(g)=s(h)$ or  $s(g')=t(g)=s(h)$ and $t(h)\neq t(h')$, then we have that $f_1 f_2=0$. Now, suppose that $s(g')=t(g)=s(h)$ and $t(h)=t(h')$. Thus we can take $k=g'^{-1}h=h'$. Hence $f_1f_2=(g.a\sharp u_g)(b\sharp u_{g'})\sharp \rho_h=g(ab)\sharp u_{gg'}\sharp \rho_h$. By the fact that $s(g)=t(g)$, we have that $s(gg')=t(gg')$ and it follows that  $(g,gg')\in G_2$ and $(gg',g)\in G_2$. Consequently, either $f_1f_2\in A_1$ or $f_1f_2\in A_7$. So, $A_1A_7\subseteq A_1+A_7+A_{10}$. 

Using similar methods we show that  $A_7A_{10}\subseteq A_1+A_7+A_{10}$, $A_7A_1\subseteq A_1+A_7+A_{10}$,  $A_{10}A_{7}\subseteq A_1+A_7+A_{10}$, $A_1A_{10}\subseteq A_1+A_7+A_{10}$ and $A_{10}A_1\subseteq A_1+A_7A_{10}$.\end{proof}

\begin{proposition} \label{8} The element $y=\sum_{l\in G}l.1_B\sharp u_{t(l)}\sharp \sum_{t(l)=s(g)}\rho_g$ is the identity of $A_1+A_7+A_{10}$.\end{proposition}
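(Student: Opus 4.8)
The plan is to verify directly that the proposed element $y = \sum_{l \in G} l.1_B \sharp u_{t(l)} \sharp \sum_{t(l)=s(g)} \rho_g$ acts as a two-sided identity on each of the generating sets $A_1$, $A_7$, $A_{10}$, and hence on their sum $A_1 + A_7 + A_{10}$, which is a subalgebra by Proposition \ref{7}. Since the sum is generated by these three sets, it suffices to check $y f = f = f y$ for an arbitrary generator $f$ of each $A_i$; by bilinearity this extends to all of $A_1 + A_7 + A_{10}$.

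First I would compute $f y$ for $f = a \sharp u_g \sharp \rho_h$ in, say, $A_7$ (so $a \notin B_g$, $a \in B_l$, $(g,h) \in G_2$, $(l,g),(g,l) \in G_2$, $s(g)=t(g)$). Using the multiplication rule $(a\sharp u_m \sharp \rho_n)(b \sharp u_s \sharp \rho_t) = a(n \rightharpoonup(b\sharp u_s))$-style formula recorded in Section 1, the product $f y$ picks out from the sum over $l'$ only the term where $\rho_h(u_{t(l')}) \neq 0$, i.e. $t(l') = $ (the relevant condition forcing $u_{t(l')}$ compatible), and similarly the convolution $\rho_h * \sum_{\cdots}\rho_g$ collapses to $\rho_h$ since $h$ appears in exactly one summand of $\sum_{t(l')=s(g)}\rho_g$. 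The action term $g.(l'.1_B)$ together with $s(g)=t(g)$ and the condition $(l,g)\in G_2$ (so $t(l)=s(g)=t(g)$, forcing $l'.1_B$ to survive as $t(g).1_B$ and act as identity on the $B_g$-component carrying $a$) should yield $f y = a \sharp u_g \sharp \rho_h = f$. The computation $y f = f$ is symmetric: the left sum over $l'$ selects the term with $t(l') = s(g)$ (using $(l,g) \in G_2$ and $s(g)=t(g)$), the action $t(l').1_B = s(g).1_B = t(g).1_B$ acts trivially on $a \in B_l = B_g$-adjacent component, and the $u_{t(l')}u_g = u_{t(g)}u_g = u_g$ and convolution $\left(\sum_{t(l')=s(\gamma)}\rho_\gamma\right)*\rho_h = \rho_h$ steps finish it. The cases $A_1$ and $A_{10}$ run the same way, using in each case the hypotheses $(g,h)\in G_2$ (so $s(h)=t(g)$, making the index bookkeeping in the $u_g$ and $\rho_h$ slots consistent with the defining conditions of $y$), together with $s(g)=t(g)$ where it is part of the definition ($A_1$, $A_{10}$) or part of $A_7$.

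The main obstacle I anticipate is the bookkeeping in the two iterated smash-product slots simultaneously: one must track, for each summand indexed by $l'$, both whether $\rho_h$ lies in the support of $\sum_{t(l')=s(\gamma)}\rho_\gamma$ (this is the condition $t(l') = s(h) = t(g)$, using $(g,h)\in G_2$) and whether the groupoid products $u_{l'} \cdot (\text{something})$ and $u_{t(l')} u_g$ are defined, and then confirm that exactly one value of $l'$ survives and that for that value the $B$-action factor $t(l').1_B$ (resp. $l'.1_B$) hits the idempotent $e.1_B$ with $e.1_B \cdot a = a$ for our chosen $a$. The Proposition \ref{4} decomposition $B = \oplus_x B_x$ with $B_x = B(t(x).1_B)$ is exactly what makes this work, since $a \in B_l$ means $(t(l).1_B) a = a$, and the defining relations of $A_1, A_7, A_{10}$ force $t(l) = t(g) = s(g)$ so that the surviving index is $l' = s(g)$ (up to the equivalence $t(l') = s(g)$). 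Once the index collapse is pinned down in one case, the remaining cases are routine variants. I would present the $A_7$ computation in full and remark that $A_1$ and $A_{10}$ are analogous, then conclude that $y$ is the identity of $A_1 + A_7 + A_{10}$.
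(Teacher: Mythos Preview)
Your proposal is correct and follows essentially the same approach as the paper: a direct verification that $y$ acts as a two-sided identity on generators of $A_7$ (worked out in detail) and then on $A_1$ and $A_{10}$ by analogous computations, using the index bookkeeping forced by $(g,h)\in G_2$ and $s(g)=t(g)$ together with the idempotent decomposition of Proposition~\ref{4}. The only minor point the paper adds that you omit is the remark that $y$ itself lies in $A_1+A_7+A_{10}$; you should state this explicitly, but it is immediate from the form of $y$.
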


\begin{proof} It is not difficult  to see that $y\in A_1+A_7+A_{10}$. Let $a\sharp u_g\sharp \rho_h\in A_7$. Then \begin{center} $(a\sharp u_g\sharp \rho_h)(\sum_{l\in G}l1_B\sharp u_{t(l)}\sharp \sum_{t(l)=s(n)}\rho_{n})=(a\sharp u_g)(\sum_{l\in G}l.1_B\sharp \delta_{hk^{-1},t(l)}u_{t(l)})\sharp \delta_{ k,n}\rho_k.$ (*)\end{center}

Note that there exists $l\in G$ such that  $t(l)=s(h)$ and in this case we may assume that $l=t(g)$. Thus, we obtain that $k=t(l)h=h$  and we have  that there exists  $n\in G$ with $t(n)=s(g)$ such that $n=h$. Hence, $(*)=(a\sharp u_g)(t(g).1_B\sharp u_{t(g))}\sharp  \rho_h=a(t(g)1_B)\sharp u_g\sharp \rho_h=a\sharp u_g\sharp \rho_h$. 

 We also consider the following equality  \begin{center} $(\sum_{l\in G} l.1_B\sharp u_{t(l)}\sharp \sum_{t(l)=s(n)}\rho_n)(a\sharp u_g\sharp \rho_h)=(\sum_{l\in G} (l.1_B\sharp u_{t(l)})(a\sharp \delta_{nk^{-1},g}u_g)\sharp \delta_{k,h}\rho_k$. \end{center} Note that we can take $n=gh$ and $k=h$. Thus, the last equality becomes $(\sum_{l\in G}l.1_B\sharp u_{t(l)})(a\sharp u_g\sharp \rho_h)=\sum_{l\in G}t(l).a\sharp u_{t(l)g}\sharp \rho_h)$. Since we can take $l=s(g)$, then $(\sum_{l\in G}l.1_B\sharp u_{t(l)})(a\sharp u_g\sharp \rho_h)=(s(g).a\sharp u_g\sharp \rho_h)$. By the fact that $a\sharp u_g\sharp \rho_h\in A_7$, we have that $a\in B_m$, for some $m\in G$ such that $(m,g)\in G_2$ and it follows that $t(m)=s(g)$. Hence, $(s(g).a\sharp u_g\sharp \rho_h)=a\sharp u_g\sharp \rho_h$. 

By similar arguments we  show that \begin{center} $(\sum_{l\in G}l1_B\sharp u_{t(l)}\sharp \sum_{t(l)=s(n)}\rho_n)(a\sharp u_g\sharp \rho_h)=a\sharp u_g\sharp \rho_h$, for all $a\sharp u_g\sharp \rho_h\in A_{10}$.\end{center}

Finally, let $g.a\sharp u_g\sharp \rho_h\in A_1$. Then \begin{center} $(\sum_{l\in G}l1_B\sharp u_{t(l)}\sharp \sum_{t(l)=s(n)}\rho_{n})(g.a\sharp u_g\sharp \rho_h)=\sum_{l\in G}(l.1_B\sharp u_{t(l)})(g.a\sharp \delta_{nk^{-1},g}u_g)\sharp \delta_{k,h}\rho_k).$(* *)\end{center}

Note that there exists $l\in G$ such that $t(l)=s(g)$. In this case, we have that  $k=h$ and $n=gh$. Consequently, $(**)=(l.1_B\sharp u_{t(l)})(g.a\sharp u_g)\sharp \rho_h=g.a\sharp u_g\sharp \rho _h$. By similar arguments  as before we have that  \begin{center}$(g.a\sharp u_g\sharp \rho_h)(\sum_{l\in G}l1_B\sharp u_{t(l)}\sharp \sum_{t(l)=s(n)}\rho_{n})=g.a\sharp u_g\sharp \rho_h$. \end{center} \end{proof}

One may ask  if the element $y=\sum_{l\in G}l.1_B\sharp u_{t(l)}\sharp \sum_{t(l)=s(g)}\rho_g$  of the Proposition 2.4  is the identity of $Im\varphi$, but the next proposition answers this question in the negative.

\begin{proposition} \label{9} The element $y=\sum_{l\in G}l.1_B\sharp u_{t(l)}\sharp \sum_{t(l)=s(g)}\rho_g$   is in the left annihilator of $A_2$, the right annihilator of $A_8$ and the right annihilator of $A_9$.\end{proposition}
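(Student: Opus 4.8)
The plan is to compute, for each of the three sets $A_2$, $A_8$, $A_9$, the relevant one-sided product of a generic basis element with $y$ and show that it vanishes, using the multiplication rule of $B\sharp KG\sharp KG^{*}$ together with the orthogonality relations $a\in B_l\Rightarrow a(t(g).1_B)=\delta_{t(l),t(g)}\,a$ coming from Proposition~\ref{4}. Concretely, for $a\sharp u_g\sharp \rho_h\in A_2$ I would expand
\[
y\,(a\sharp u_g\sharp \rho_h)=\sum_{l\in G}(l.1_B\sharp u_{t(l)})\big(a\sharp \delta_{nk^{-1},g}u_g\big)\sharp \delta_{k,h}\rho_k,
\]
as in the proof of Proposition~\ref{8}; the only surviving term forces $k=h$, $n=gh$ and $t(l')=s(g)$ for the relevant $l'$, so the product collapses to $s(g).a\sharp u_g\sharp \rho_h=a(t(\,\cdot\,).1_B)\cdots$. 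The point is that for $A_2$ one has $s(g)=t(g)$ but (by the defining condition of $A_2$, inherited through $a$ and the constraint $s(g)\neq t(g)$ appearing there) the factor $s(g).a$ kills $a$: more precisely, membership in $A_2$ is the case $g.a\sharp u_g\sharp\rho_h$ with $s(g)\neq t(g)$, and then acting by $y$ on the left reproduces the computation $(**)$ of Proposition~\ref{8} but the idempotent $t(l).1_B$ that must be inserted corresponds to an $l$ with $t(l)=s(g)\neq t(g)$, whereas $g.a\in B_{t(g)}$, forcing the product to be $0$.

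For $A_8$ and $A_9$ I would instead compute $(a\sharp u_g\sharp \rho_h)\,y$. Expanding
\[
(a\sharp u_g\sharp \rho_h)\Big(\sum_{l\in G}l.1_B\sharp u_{t(l)}\sharp \textstyle\sum_{t(l)=s(n)}\rho_n\Big)=(a\sharp u_g)\Big(\sum_{l}l.1_B\sharp \delta_{hk^{-1},t(l)}u_{t(l)}\Big)\sharp \delta_{k,n}\rho_k,
\]
the Kronecker deltas force $t(l)=s(h)$, hence (as in Proposition~\ref{8}) effectively $l=t(g)$, $k=h$, and the product becomes $a(t(g).1_B)\sharp u_g\sharp\rho_h=\delta_{t(m),t(g)}\,a\sharp u_g\sharp\rho_h$ where $a\in B_m$. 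But the defining conditions of $A_8$ (resp.\ $A_9$) include $a\notin B_g$, i.e.\ $t(m)\neq t(g)$, so the coefficient is $0$ and the product vanishes. I expect the same Kronecker-delta bookkeeping to be the only content here, and the nonvanishing-of-the-other-factor reasoning to be exactly the one already used for $A_6$ in Theorem~\ref{5}.

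The main obstacle I anticipate is purely bookkeeping: making sure that in each expansion the two independent $KG^{*}$-summation indices (the $n$ inside $y$ and the $k$ produced by the convolution product) are correctly matched to the surviving $u$-term, and that the groupoid composability constraints $t(l)=s(n)$, $t(g)=s(t(l))$, etc., are consistently tracked so that exactly one term of the sum over $l\in G$ survives. Once that term is isolated, the vanishing is immediate from $a\notin B_g$ (for $A_8$, $A_9$) or from $s(g)\neq t(g)$ together with $g.a\in B_{t(g)}$ (for $A_2$), via Proposition~\ref{4}. I would present the $A_2$ case in full detail and then remark that $A_8$ and $A_9$ follow by the same argument applied on the other side, citing the computations in Propositions~\ref{7} and~\ref{8}.
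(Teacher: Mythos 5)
Your proposal is correct and follows essentially the same route as the paper: expand each one-sided product with $y$ via the Kronecker-delta bookkeeping already used in Proposition~\ref{8}, isolate the unique surviving term, and kill it by orthogonality of the idempotents $e.1_B$ from Proposition~\ref{4} together with the defining conditions of $A_2$, $A_8$, $A_9$. The only divergence worth noting is in the $A_2$ case: the paper asserts the product vanishes ``because $(g,h)\notin G_2$,'' which is incompatible with the definition of $A_2$ (where $(g,h)\in G_2$), whereas your justification --- the inserted idempotent $s(g).1_B$ annihilates $g.a\in B_{t(g)}$ since $s(g)\neq t(g)$ --- is the argument that actually works under the conventions of Proposition~\ref{4}, so on this point your write-up is more careful than the paper's.
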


\begin{proof} Let $g.a\sharp u_g\sharp \rho_h\in A_2$. Then \begin{center}$y(g.a\sharp u_g\sharp \rho_h)=\sum_{l\in G} ((l.1_B)\sharp u_{t(l)})(g.a\sharp \delta_{gk^{-1},g}u_g)\sharp \delta_{k,h}\rho_k$. (1)\end{center}  By the fact that $(g,h)\notin G_2$, then $(1)=0$.
Next, let $a\sharp u_g\sharp \rho_h$. Then \begin{center}$(a\sharp u_g\sharp \rho_h)y=(a\sharp u_g)(\sum_{l\in g}l.1_B\sharp \delta_{hk^{-1},t(l)}u_{t(l)})\sharp \delta_{k,n}\rho_k.$ (2) \end{center} Following the same arguments presented  in Theorem 2.2 we have that $(2)= s(g).a\sharp u_g\sharp \rho_h$. Since $a\in B_l$, for some $l\in G$ and $s(g)\neq t(g)$, then $s(g).a\sharp u_g\sharp \rho_h=0$. By similar arguments we show that $A_9y=0$. \end{proof}

Now, we are ready to prove the second main result of the paper.

\begin{theorem} \label{10} There exists a subalgebra $S$ of $A\sharp KG \sharp KG^{*}$ that contains an unital subalgebra $S'$ such that $S=S'\oplus T$ as $K$-algebras  and an ideal $D$ of $A\sharp KG\sharp KG^{*}$ such that $A\sharp KG\sharp KG^{*}=D\oplus  S$.\end{theorem}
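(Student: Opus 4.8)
We write $B$ for the algebra denoted $A$ in the statement. The plan is to read off $D$, $S$, $S'$ and $T$ directly from Theorem \ref{5} and Propositions \ref{7}, \ref{8}, \ref{9}. Put $D:=\ker\varphi$. By Theorem \ref{5}, $D=A_3+A_4+A_5+A_6$, and since $D$ is the kernel of the algebra homomorphism $\varphi$ it is a two-sided ideal of $B\sharp KG\sharp KG^{*}$. Put $S:=A_1+A_2+A_7+A_8+A_9+A_{10}$. The isomorphisms recorded at the end of Theorem \ref{5} say exactly that the natural map $S\to (B\sharp KG\sharp KG^{*})/D$ is a $K$-linear bijection, i.e. $S\cap D=(0)$, while $S+D=B\sharp KG\sharp KG^{*}$ because $B\sharp KG\sharp KG^{*}=\sum_{i=1}^{10}A_i$. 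Hence $B\sharp KG\sharp KG^{*}=D\oplus S$ as $K$-vector spaces and $\varphi|_{S}\colon S\to Im\varphi$ is a $K$-linear isomorphism.

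The substantive step is to show that $S$ is closed under multiplication, that is, $A_iA_j\subseteq S$ for all $i,j\in\{1,2,7,8,9,10\}$. I would do this by the same bookkeeping as in Proposition \ref{7}: expand a product $(a\sharp u_g\sharp \rho_h)(b\sharp u_{g'}\sharp \rho_{h'})$ using the multiplication rule of $B\sharp KG\sharp KG^{*}$, simplify with $\rho_c*\rho_d=\delta_{c,d}\rho_c$ and $u_cu_d=\delta_{t(c),s(d)}u_{cd}$, use Proposition \ref{4} to track which summand $B_x$ the $B$-entry of the surviving term lies in, and then use $s(gg')=s(g)$ and $t(gg')=t(g')$ to decide, from the conditions $(g,h)\in G_2$ and whether $s(g)=t(g)$, into which $A_k$ that term falls (or that it vanishes). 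The cases with $i,j\in\{1,7,10\}$ are Proposition \ref{7}; the remaining ones, with at least one index in $\{2,8,9\}$, are entirely analogous. Once this is done, $S$ is a subalgebra of $B\sharp KG\sharp KG^{*}$ and, by the first isomorphism theorem, $S\simeq (B\sharp KG\sharp KG^{*})/D\simeq Im\varphi$; together with the fact that $D$ is an ideal, this already proves $B\sharp KG\sharp KG^{*}=D\oplus S$ as required.

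It remains to exhibit $S'$ and $T$ inside $S$. Set $S':=A_1+A_7+A_{10}$ and $T:=A_2+A_8+A_9$. By Propositions \ref{7} and \ref{8}, $S'$ is a subalgebra of $B\sharp KG\sharp KG^{*}$ with identity $y=\sum_{l\in G}l.1_B\sharp u_{t(l)}\sharp \sum_{t(l)=s(g)}\rho_g$, hence a unital subalgebra of $S$. Every element of $S'$ is a $K$-linear combination of generators $a\sharp u_g\sharp \rho_h$ with $s(g)=t(g)$, whereas every element of $T$ is a $K$-linear combination of generators with $s(g)\neq t(g)$; since $\{u_g\}_{g\in G}$ is a $K$-basis of $KG$, this gives $S'\cap T=(0)$, and as $S'+T=S$ we obtain $S=S'\oplus T$ as $K$-vector spaces, with $S'$ a unital subalgebra — which is the decomposition asserted in the statement. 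One further checks, using Proposition \ref{9} together with its left/right-symmetric analogues and the algebra structure of $S$ obtained above, that $S'T\subseteq T$ and $TS'\subseteq T$, so that $T$ is a complementary $S'$-sub-bimodule of $S$. I expect the main obstacle to be the product table $A_iA_j\subseteq S$ of the second paragraph (equivalently, that the $K$-linear section of $\varphi|_S$ is multiplicative) together with the symmetric versions of Proposition \ref{9}; everything else — kernels of homomorphisms are ideals, the first isomorphism theorem, and the splitting of $KG$ by loop / non-loop support — is routine.
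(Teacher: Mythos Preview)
Your proposal is correct and follows essentially the same route as the paper: the same choices $D=\ker\varphi$, $S=A_1+A_2+A_7+A_8+A_9+A_{10}$, $S'=A_1+A_7+A_{10}$, $T=A_2+A_8+A_9$, and the same appeals to Theorem~\ref{5} and Propositions~\ref{7}, \ref{8}, \ref{9}. You are in fact more careful than the paper on one point: the paper simply writes ``$S=Im\,\varphi$'' and treats this as a subalgebra, whereas you correctly flag that the $K$-linear splitting $S\to Im\,\varphi$ is not automatically multiplicative and that the inclusions $A_iA_j\subseteq S$ for $i,j\in\{1,2,7,8,9,10\}$ must be checked by the same bookkeeping as in Proposition~\ref{7}; your observation that $S'$ and $T$ are separated by the condition $s(g)=t(g)$ versus $s(g)\neq t(g)$ is also a clean way to get $S'\cap T=0$ that the paper leaves implicit.
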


\begin{proof} By  the  Propositions 2.3 and 2.4, $S=Im\varphi$ contains an unital subalgebra $S'=A_1+ A_7+ A_{10}$. Now, applying  the Propositions 2.4 and 2.5  we obtain that $S=(A_1+A_7+A_{10})\oplus (A_2+A_8+A_9)$ as $K$-algebras. So, by Theorem 2.2 we have that    $A\sharp KG\sharp KG^*=S\oplus D$, where $D=Ker\varphi$.\end{proof}

\begin{remark} \label{11} By Theorem \ref{10}, the map $\varphi:A\sharp KG\sharp KG^{*}\rightarrow End(A\sharp KG)$ given by $\varphi(a\sharp u_g\sharp \rho_h)(b\sharp u_l)=(a\sharp u_g)(b\sharp \rho_h\rightharpoonup u_l)$ induces  the following exact sequence of $K$-algebras
\begin{center} $0\rightarrow D\rightarrow  A\sharp KG\sharp KG^{*}\rightarrow \varphi(S)\rightarrow 0.$\end{center}\end{remark}

\par Next, we present an example where we apply our results.

\begin{example} \label{3} Let $G = \{t(g),s(g),g,g^{-1}\}$ be a groupoid  with $G_0=\{t(g), s(g)\}$ such that $s(g)\neq t(g)$ and $B = Ke_1\oplus 
Ke_2\oplus  Ke_3$, where $K$ is a field and $e_1$, $e_2$, $e_3$ are pairwise orthogonal 
central idempotents with sum $1_B$. We define the action of $KG$ as follows: $g.(a_1e_1+a_2e_2+a_3e_3)=a_1e_2+a_2e_1+a_3e_1$, $g^{-1}(a_1e_1+a_2e_2+a_3e_3)=a_1e_2+a_2e_1+a_3e_2$, $t(g)|_{Ke_1\oplus Ke_2}=id_{Ke_1\oplus Ke_2}=s(g)|_{Ke_1\oplus Ke_2}$, $t(g)(ae_3)=ae_2$ and $s(g)(ae_3)=ae_1$.  

%Note that, by (\cite{DFAP},  Lemma 5.3)  we have that $(KG)_t=Kt(g)\oplus Ks(g)$ and $((KG)^*)_{t}=Kt(g)\oplus Ks(g)\oplus Kg\oplus Kg^{-1}$. Let $u_g\sharp \rho_g\in KG\sharp KG^*$. Then $u_g\sharp \rho_g=u_g\otimes\rho_g=u_g\leftharpoonup \rho_h \otimes 1_{KG^*}=\delta_{g,g}u_g\otimes 1_{KG^{*}}=u_g\otimes 1_{KG^{*}}$. Hence, in $B\sharp KG\sharp KG^*$ the elements of the form $a\sharp u_g\sharp \rho_g$  are not zero.  

By  Theorem 2.2 we have that $A_1=A_{s(g)}\sharp u_{s(g)}\sharp \rho_{s(g)}+A_{s(g)}\sharp u_{t(g)}\sharp \rho_g+A_{t(g)}\sharp u_{t(g)}\sharp \rho_{t(g)}+A_{t(g)}\sharp u_{t(g)}\sharp \rho_{g^{-1}}$, $A_2=A_{s(g)}\sharp u_{g^{-1}}\sharp \rho_{g}+A_{s(g)}\sharp u_{g^{-1}}\sharp \rho_{s(g)}+A_{t(g)}\sharp u_g\sharp\rho_{t(g)}+A_{t(g)}\sharp u_g\sharp \rho_{g^{-1}}$, $A_3=A_{s(g)}\sharp u_{s(g)}\sharp \rho_{t(g)}+A_{s(g)}\sharp u_{s(g)}\sharp \rho_{g^{-1}}+A_{s(g)}\sharp u_{t(g)}\sharp \rho_{s(g)}+A_{s(g)}\sharp u_{t(g)}\sharp \rho_g+A_{s(g)}\sharp u_g\sharp \rho_{s(g)}+A_{s(g)}\sharp u_g\sharp \rho_g+A_{s(g)}\sharp u_{g}\sharp \rho_{t(g)}+A_{s(g)}\sharp u_{g^{-1}}\sharp \rho_{g^{-1}}+A_{t(g)}\sharp u_{s(g)}\sharp \rho_{t(g)}+A_{t(g)}\sharp u_{s(g)}\sharp \rho_{g^{-1}}+A_{t(g)}\sharp u_{t(g)}\sharp \rho_{s(g)}+A_{t(g)}\sharp u_{t(g)}\sharp \rho_{g}+A_{t(g)}\sharp u_{g^{-1}}\sharp \rho_{g^{-1}}+A_{t(g)}\sharp u_{g^{-1}}\sharp \rho_{t(g)}+A_{s(g)}\sharp u_{t(g)}\sharp \rho_{s(g)}$, $A_4= A_{s(g)}\sharp u_{t(g)}\sharp \rho_{t(g)}+A_{s(g)}\sharp u_{t(g)}\sharp \rho_{g^{-1}}+A_{t(g)}\sharp u_{s(g)}\sharp \rho_{s(g)}+A_{t(g)}\sharp u_{s(g)}\sharp \rho_{s(g)}$, $A_5=A_{s(g)}\sharp u_g\sharp \rho_{t(g)}+A_{s(g)}\sharp u_g\sharp \rho_{g^{-1}}+A_{t(g)}\sharp u_{g^{-1}}\sharp \rho_{s(g)}+A_{t(g)}\sharp u_{g^{-1}}\sharp \rho_{g}$.

Thus, $Ker\varphi=A_3+A_4+A_5$, $Im\varphi=\varphi(A_1+A_2)$,  $y=g1_A\sharp u_{t(g)}\rho_{t(g)}+g1_B\sharp u_{t(g)}\sharp \rho_{g^{-1}}+g^{-1}1_B\sharp u_{s(g)}\sharp \rho_{s(g)}+g^{-1}1_B\sharp u_{s(g)}\sharp \rho_g$ is the identity of element of $A_1$   and $yA_2=0$. Moreover, we have the following exact sequence \begin{center}  $0\rightarrow Ker\varphi\rightarrow A\sharp KG\sharp KG^{*}\rightarrow Im\varphi\rightarrow 0$\end{center}
\end{example}

Accordingly (\cite{DFAP}, Section 2)  an action of a groupoid $G$ on a $K$-algebra $T$ is a pair $\beta= (\{E_g\}_{g\in G},\{\beta_g\}_{g\in G})$, where for each $g\in G$, $E_g=E_{gg^{-1}}$ is an ideal of $T$ and $\beta_g:E_{g^{-1}} \rightarrow E_g$ is an isomorphism of rings satisfying the following conditions:

(i)  $\beta_e$ is the identity map $I_{E_e}$ of $E_e$, for all $e\in G_0$,

(ii) $\beta_g \beta_h(r)=\beta_{gh}(r)$, for all $(g,h)\in G_2$ and $r\in E_{h^{-1}}=E_{(gh)^{-1}}$.

Let $\beta=(\{E_g\}_{g\in G},\{\beta_g\}_{g\in G})$ be an action of $G$ on a $K$-algebra $T$.  Accordingly (\cite{DFAP}, Section 2), the skew groupoid ring $T*_{\beta} G$ corresponding to $\beta$ is defined
as  $\oplus_{g\in G} E_g\delta_g$,
in which the $\delta_g$’s are symbols, with the usual sum  and multiplication rule  \begin{center} $(x\delta_g)(y\delta_h)=\left\{\begin{array}{cc} x\beta_g(y)\delta_{gh}, \,\,if \,\, (g,h)\in G_2\\
 	0, \,\, otherwise
 	\end{array}\right\}$\end{center} 

Now, we are ready  to study    a relationship between  our description and  the one obtained in   (\cite{DFAP}, Section 5).  

\begin{theorem}  \label{12}  Let $G$ be a finite groupoid, $\beta=(\{E_g\}_{g\in G},\{\beta_g\}_{g\in G})$ an action of $G$ on $B$. Suppose that $B$ is a weak $KG$-module algebra  and consider the same assumptions of Theorem 2.2.  Then  we have the  exact sequence    \begin{center} $0\rightarrow D_1 \rightarrow B*_{\beta}G\sharp KG^{*}\rightarrow \varphi(\Psi(C))\rightarrow 0$\end{center}  where $D_1=\oplus_{(g,h)\notin G_2} E_g\delta_g\sharp \rho_h$, $C=\oplus_{(g,h)\in G_2}E_g\delta_h\sharp \rho_h$ and $\Psi:B*_{\beta} G\sharp KG^*\rightarrow B\sharp KG\sharp KG^*$ is defined by $\Psi(a_g\delta_g\sharp \rho_h)=a_g\sharp u_g\sharp \rho_h$. \end{theorem}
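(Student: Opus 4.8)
The goal is to package the results of Theorem~\ref{5} together with the skew groupoid ring description of \cite{DFAP} into a single exact sequence. The plan is to show that the map $\Psi\colon B*_\beta G\sharp KG^*\to B\sharp KG\sharp KG^*$, $\Psi(a_g\delta_g\sharp\rho_h)=a_g\sharp u_g\sharp\rho_h$, is an injective algebra homomorphism, identify its image, and then pull the exact sequence of Remark~\ref{11} back along $\Psi$. First I would verify that $\Psi$ is multiplicative: this is a direct computation comparing $(x\delta_g)(y\delta_h)$ in $B*_\beta G$ (which is $x\beta_g(y)\delta_{gh}$ when $(g,h)\in G_2$ and $0$ otherwise) with the corresponding product in $B\sharp KG$, using that the action $\beta_g$ on $E_{g^{-1}}$ is compatible with the weak $KG$-module structure $g.(-)$ restricted to the appropriate component $B_x$; the extra $KG^*$ factor multiplies by convolution exactly as in both algebras, so it contributes nothing new. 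Injectivity of $\Psi$ is immediate since it sends distinct basis-type generators $a_g\delta_g\sharp\rho_h$ to the linearly independent elements $a_g\sharp u_g\sharp\rho_h$ inside the tensor product over $K$.

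Next I would locate the image $\Psi(B*_\beta G\sharp KG^*)$ inside the decomposition $B\sharp KG\sharp KG^* = A_1+\dots+A_{10}$. Because each $E_g\subseteq B_g$ (an element of $E_g=E_{gg^{-1}}$ lies in the component $B$ acted on trivially by $gg^{-1}$, i.e.\ in $B_{s(g)}$, hence is of the form $g.a$ with $a\in B_{t(g)}$), the summand $\bigoplus_{(g,h)\in G_2}E_g\delta_g\sharp\rho_h$ maps into $A_1+A_2$, while $\bigoplus_{(g,h)\notin G_2}E_g\delta_g\sharp\rho_h$ maps into $A_3$. Thus with $C=\bigoplus_{(g,h)\in G_2}E_g\delta_h\sharp\rho_h$ (here $\delta_h$ should be read as $\delta_g$ in the natural indexing, matching the statement) and $D_1=\bigoplus_{(g,h)\notin G_2}E_g\delta_g\sharp\rho_h$, one has $B*_\beta G\sharp KG^* = C\oplus D_1$ as $K$-modules, $\Psi(D_1)\subseteq A_3\subseteq\ker\varphi$, and $\Psi(C)\subseteq A_1+A_2$, on which $\varphi$ is injective by Theorem~\ref{5}. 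I would then check that $D_1$ is an ideal of $B*_\beta G\sharp KG^*$: this follows because multiplying any $E_g\delta_g\sharp\rho_h$ with $(g,h)\notin G_2$ by an arbitrary generator either produces $0$ or again lands in a $\delta_{g'}\sharp\rho_{h'}$ term with $(g',h')\notin G_2$, using that the source/target bookkeeping of Lemma~1.3 forces $(gg'',h)\notin G_2$ whenever $(g,h)\notin G_2$ and the $KG^*$-convolution only permits $h'=h$.

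Finally I would assemble the sequence. The composite $\varphi\circ\Psi\colon B*_\beta G\sharp KG^*\to End_B(B\sharp KG)$ has kernel exactly $\Psi^{-1}(\ker\varphi)$; since $\Psi(D_1)\subseteq\ker\varphi$ and $\varphi$ restricted to $\Psi(C)\subseteq A_1+A_2$ is injective (the $A_i\cap\ker\varphi=(0)$ part of Theorem~\ref{5}), the kernel of $\varphi\circ\Psi$ is precisely $D_1$. Therefore the sequence
\begin{center}$0\rightarrow D_1\rightarrow B*_\beta G\sharp KG^*\xrightarrow{\varphi\circ\Psi}\varphi(\Psi(C))\rightarrow 0$\end{center}
is exact, which is the claim. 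I expect the main obstacle to be the multiplicativity check for $\Psi$: one must carefully match the twist $\beta_g$ of the skew groupoid ring with the module action $g.(-)$ on the correct direct summand $B_x$ of $B$ given by Proposition~\ref{4}, and confirm that the ``otherwise $=0$'' cases on both sides coincide exactly under the groupoid composability condition $(g,h)\in G_2$; once that is settled, the identification of the image and the ideal property of $D_1$ are bookkeeping with the source/target relations.
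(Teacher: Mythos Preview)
Your approach is essentially the same as the paper's: both establish the exact sequence by showing that $\varphi\circ\Psi$ has kernel $D_1$ and image $\varphi(\Psi(C))$, via the splitting $B*_\beta G\sharp KG^* = C\oplus D_1$ together with $\Psi(D_1)\subseteq\ker\varphi$ and injectivity of $\varphi$ on $\Psi(C)$.

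The differences are matters of detail rather than strategy. The paper is terser: it simply asserts $D_1=\ker(\varphi\circ\Psi)$ and invokes \cite[Proposition~5.1]{DFAP} for the decomposition $C\oplus D_1$, whereas you verify the ideal property of $D_1$ and the multiplicativity of $\Psi$ directly. The paper states the looser containment $\Psi(C)\subseteq A_1+A_2+A_3$ (harmless, since the definition of $C$ already forces $(g,h)\in G_2$, hence $\Psi(C)\cap A_3=0$), while you aim for the sharper $\Psi(C)\subseteq A_1+A_2$; your justification of the latter via $E_g\subseteq B_g$ implicitly uses a compatibility between the groupoid action $\beta$ and the weak $KG$-module structure that the statement does not make fully explicit, so you should flag that hypothesis. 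Finally, the paper's proof contains an extra paragraph showing $\Psi(B_0)=A_1$ for $B_0=\bigoplus_{(g,h)\in G_2,\,s(g)=t(g)}E_g\delta_g\sharp\rho_h$; this is not needed for the exact sequence itself but is what feeds into the subsequent corollary identifying $A_1$ as a homomorphic image of a matrix algebra, so you may want to add it if you intend to recover that consequence.
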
 
   
   \begin{proof}  First, we easily have that $D_1=ker\varphi\circ \psi$. By (\cite{DFAP}, Proposition 5.1), \begin{center} $B*_{\beta}G\sharp KG^*=C\oplus D_1$\end{center}  and we have that   $(\varphi\circ \Psi)(A*_{\alpha}G\sharp KG^{*})=\varphi(\Psi (C\oplus D_1))=\varphi(\Psi (C))$. It is not difficult to see that $\Psi(C)\subseteq A_1+A_2+A_3$.

We claim that $\Psi(B_0)=A_1$, where $B_0=\oplus_{(g,h)\in G_2, s(g)=t(g)} E_g\delta_g\sharp \rho_h$. In fact, for each element $g.a\sharp u_g\sharp \rho_h\in A_1$ we have that $\Psi(g.a \delta_g\sharp \rho_h)=g.a\sharp u_g\sharp \rho_h$ which implies that $A_1\subseteq \Psi(B_0)$. The other inclusion is trivial. Hence, $\Psi(B_0)=A_1$ is an algebra with identity. 

\end{proof}

Using Theorem \ref{12} and (\cite{DFAP}, Theorem 3.7) we have the following result.

 \begin{corollary}  With  the same assumptions of Theorem \ref{12} we have that $$A_1=\{g.a\sharp u_g\sharp \rho_h|(g,h)\in G_2\,\, and \,\, s(g)=t(g)\}$$ is an homomorphic image of an matrix algebra.
 
 \end{corollary}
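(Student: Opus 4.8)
The plan is to realise $A_1$ as the last term of a chain of surjective algebra homomorphisms whose source is a matrix algebra, using exactly the two ingredients named in the statement: the identification $A_1=\Psi(B_0)$ obtained inside the proof of Theorem~\ref{12}, and the structural description of skew groupoid rings in (\cite{DFAP}, Theorem 3.7).

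First I would record the two properties of $\Psi$ that are needed. Since $\Psi\colon B*_{\beta}G\sharp KG^{*}\to B\sharp KG\sharp KG^{*}$, $\Psi(a_g\delta_g\sharp\rho_h)=a_g\sharp u_g\sharp\rho_h$, is the algebra homomorphism already used in Theorem~\ref{12}, its restriction to any subalgebra is again an algebra homomorphism. I would then check that
\[
B_0=\bigoplus_{(g,h)\in G_2,\ s(g)=t(g)} E_g\delta_g\sharp\rho_h
\]
is a subalgebra of $B*_{\beta}G\sharp KG^{*}$: if $s(g)=t(g)$, $s(g')=t(g')$ and the product $(a\delta_g\sharp\rho_h)(b\delta_{g'}\sharp\rho_{h'})$ is nonzero, then the defining relations of the double smash product force $h=g'=h'$ and $(g,g')\in G_2$, hence $t(g)=s(g')$, so that $g$ and $g'$ lie in a common isotropy group, $gg'$ lies in it too, and $(gg',h)\in G_2$; therefore the product stays in $B_0$. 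Together with the computation in the proof of Theorem~\ref{12} that $\Psi(B_0)=A_1$ (and that $A_1$ is unital), this exhibits $A_1$ as a homomorphic image of the algebra $B_0$, namely $A_1\cong B_0/\ker(\Psi|_{B_0})$.

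Next I would identify $B_0$ with a skew--groupoid--ring type construction attached to the isotropy (principal) subgroupoid $G'=\{g\in G:s(g)=t(g)\}$, a finite disjoint union of the isotropy groups $G_e$, $e\in G_0$; note that for $g\in G_e$ one has $E_g=E_{gg^{-1}}=E_e$, so that, up to the dual factor, the $\delta_g$-part of $B_0$ over $G_e$ is precisely the skew group ring $E_e*_{\beta}G_e$. To this construction (\cite{DFAP}, Theorem 3.7) applies and exhibits $B_0$ as a homomorphic image of a matrix algebra (with the matrix sizes governed by the $|G_e|$, as in the quoted theorem). Composing this surjection with $\Psi|_{B_0}\colon B_0\twoheadrightarrow A_1$ from the previous step yields a surjective algebra homomorphism from a matrix algebra onto $A_1$, which is the assertion.

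I expect the real content — and the main obstacle — to be the precise matching of $B_0$ with the hypotheses of (\cite{DFAP}, Theorem 3.7) once the dual factor $KG^{*}$ is present: one has to pin down which algebra plays the role of the skew groupoid ring there, verify that the multiplication on $B_0$ inherited from $B*_{\beta}G\sharp KG^{*}$ (in which the convolution product of the $\rho$'s and the $\rightharpoonup$-action genuinely intertwine the two tensor factors, rather than giving an honest tensor product) is the one covered by that theorem, and keep careful track of the direct sum over $G_0$ so that the source of the surjection is a single matrix algebra. The other steps — $\Psi$ being an algebra homomorphism, $B_0$ being a subalgebra, and composing two surjections — are routine once this identification has been carried out.
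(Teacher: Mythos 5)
Your proposal follows essentially the route the paper itself indicates: the paper gives no written proof of this corollary beyond the sentence that it follows from Theorem \ref{12} (which yields $A_1=\Psi(B_0)$) and (\cite{DFAP}, Theorem 3.7), and your argument fills in exactly that outline --- restrict $\Psi$ to $B_0$, recognize $B_0$ as a skew-groupoid-ring-type algebra over the isotropy part of $G$ covered by the quoted theorem, and compose the two surjections. One local slip: in your verification that $B_0$ is closed under multiplication, the nonvanishing of $(a\delta_g\sharp\rho_h)(b\delta_{g'}\sharp\rho_{h'})$ forces $h=g'h'$ (i.e.\ $h'=g'^{-1}h$, coming from $\Delta(\rho_h)=\sum_{t(h)=t(k)}\rho_{hk^{-1}}\otimes\rho_k$ and $\rho_k*\rho_{h'}=\delta_{k,h'}\rho_k$), not $h=g'=h'$; the resulting term $a\beta_g(b)\delta_{gg'}\sharp\rho_{h'}$ still lies in $B_0$, so your conclusion stands, but the stated condition should be corrected.
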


%\par   We consider the sets  $\overline{A_1}=\{g.a\sharp u_g\sharp 1_{KG^*}|s(g)=t(g)\}$, $\overline{A_{10}}=\{a\sharp u_g\sharp 1_{KG^{*}}| a\in A_l, s(g)=t(g), a\notin A_g, (g,l)\in G_2\,\, and\,\, (l,g)\in G_2\}$ and $\overline{A_7}=\{a\sharp u_g\sharp 1_{KG^*}|a\in A_l, s(g)=t(g), a\notin A_g, (l,g)\in G_2, (g,l)\notin G_2\}$. 

%We define $\Psi:B\sharp KG\rightarrow B\sharp KG\sharp KG^*$ by $\Psi(a\sharp u_g)=a\sharp u_g\sharp 1_{KG^*}$. It is not difficult to see that $\Psi$ is a monomorphism of algebras.

%The proof of the following lemma  follows the same ideas of Proposition \ref{7}.

%\begin{lemma} \label{13}$\overline{A_1}+\overline{A_{10}}+\overline{A_7}$ is a subalgebra of $B\sharp KG\sharp KG^{*}$ contained in $A_1+A_{10}+A_7$.\end{lemma}

%\par Let $R\subseteq S$ be a extension of rings.  $S$ is said to be separable over $R$ if there exists $e=\sum_{i=1}^{n} s_i\otimes t_i\in S\otimes_{R} S$ such that $e^2=e$ and  for any $r\in R$ we have that $\sum_{i=1}^{n} rs_i\otimes t_i=\sum_{i=1}^{n} s_i\otimes t_ir$ and $\sum_{i=1}^{n}s_it_i=1_S$.

%We finish this article with the follwing result.

%\begin{proposition} \label{14} $A_1+A_{10}+A_7$ is a separable algebra over  $\overline{A_1}+\overline{A_{10}}+\overline{A_7}$.\end{proposition}

%\begin{proof} Let $e=\sum_{l\in G}g.1_A\sharp u_{t(g)}\sharp \sum_{t(g)=s(h)}\rho_h$. Then $e$ is an idempotent and we easily have that $e\otimes e$ is the required element for \begin{center}$(A_1+A_{10}+A_7)\otimes_{\overline{A_1}+\overline{A_{10}}+\overline{A_7}} (A_1+A_{10}+A_7)$.
.%\end{center} \end{proof}  

\end{document}